\theoremstyle{plain}
\newtheorem{lemma}{Lemma}
\newtheorem{remark}{Remark}
\newtheorem{theorem}{Theorem}
\numberwithin{equation}{section}
\begin{document}

\title{Spacetime integral bounds for the energy-critical nonlinear wave equation}
\date{\today}
\author{Benjamin Dodson}
\maketitle

\begin{abstract}
In this paper we prove a global spacetime bound for the quintic, nonlinear wave equation in three dimensions. This bound depends on the $L_{t}^{\infty} L_{x}^{2}$ and $L_{t}^{\infty} \dot{H}^{2}$ norms of the solution to the quintic problem.
\end{abstract}

\section{Introduction}
It has been known for a long time that the defocusing, quintic nonlinear wave equation,
\begin{equation}\label{1.1}
u_{tt} - \Delta u + u^{5} = 0, \qquad u(0,x) = u_{0}, \qquad u_{t}(0,x) = u_{1}, \qquad u : I \times \mathbb{R}^{3} \rightarrow \mathbb{R},
\end{equation}
has a global solution that scatters for initial data lying in the critical Sobolev space $(u_{0}, u_{1}) \in \dot{H}^{1} \cap L^{6} \times L^{2}$. Equation $(\ref{1.1})$ is called energy-critical because the solution conserves the energy
\begin{equation}\label{1.2}
E(u(t)) = \frac{1}{2} \int |\nabla u(t,x)|^{2} dx + \frac{1}{2} \int |u_{t}(t,x)|^{2} dx + \frac{1}{6} \int |u(t,x)|^{6} dx,
\end{equation}
which is invariant under the scaling symmetry
\begin{equation}\label{1.14}
u(t,x) \mapsto \lambda^{1/2} u(\lambda t, \lambda x).
\end{equation}
Conservation of energy implies that for all $t \in I$, where $I$ is the maximal interval of existence for a solution to $(\ref{1.1})$,
\begin{equation}\label{1.3}
\| (u(t), u_{t}(t)) \|_{\dot{H}^{1} \cap L^{2} \times L^{2}} \lesssim_{\| u_{0} \|_{\dot{H}^{1} \cap L^{6}}, \| u_{1} \|_{L^{2}}} 1.
\end{equation}
The Sobolev embedding theorem guarantees that in $\mathbb{R}^{3}$, $\| u_{0} \|_{L^{6}} \lesssim \| u_{0} \|_{\dot{H}^{1}}$.\medskip

Equation $(\ref{1.1})$ belongs to the large class of equations for which the local and small data theory is entirely determined by its scaling symmetry, $(\ref{1.14})$. Indeed, the local well-posedness result of \cite{MR153967} implies that $I$ is an open interval for any $(u_{0}, u_{1}) \in \dot{H}^{1} \times L^{2}$. Scattering was proved in \cite{MR731347} and  \cite{MR631403} for small energy data. Observe that $u(t,x)$ is a solution to $(\ref{1.1})$ if and only if $v(t,x) = \lambda^{1/2} u(\lambda t, \lambda x)$ is a solution to $(\ref{1.1})$ for different initial data.\medskip

For radial initial data, \cite{MR1015805} proved global well-posedness of $(\ref{1.1})$ for any initial data lying in the energy space. Later, \cite{ginibre1992global} combined the Morawetz estimate
\begin{equation}\label{1.4}
\int_{I} \int \frac{1}{|x|} u(t,x)^{6} dx dt \lesssim E(u_{0}, u_{1}),
\end{equation}
with the radial Sobolev embedding theorem,
\begin{equation}\label{1.5}
\| |x| u^{2} \|_{L_{t,x}^{\infty}(I \times \mathbb{R}^{3})} \lesssim E(u_{0}, u_{1}),
\end{equation}
to obtain the bound
\begin{equation}\label{1.6}
\int_{I} \int_{\mathbb{R}^{3}} u(t,x)^{8} dx dt \lesssim E(u_{0}, u_{1})^{2}.
\end{equation}
This bound is enough to prove that $(\ref{1.1})$ is globally well-posed and scattering for any radial initial data $(u_{0}, u_{1}) \in \dot{H}^{1} \times L^{2}$. Scattering is defined as there exist $(u_{0}^{+}, u_{1}^{+}) \in \dot{H}^{1} \times L^{2}$ and $(u_{0}^{-}, u_{1}^{-}) \in \dot{H}^{1} \times L^{2}$ such that
\begin{equation}\label{1.7}
\lim_{t \rightarrow +\infty} \| (u(t), u_{t}(t)) - S(t)(u_{0}^{+}, u_{1}^{+}) \|_{\dot{H}^{1} \times L^{2}} = 0,
\end{equation}
and
\begin{equation}\label{1.8}
\lim_{t \rightarrow -\infty} \| (u(t), u_{t}(t)) - S(t)(u_{0}^{-}, u_{1}^{-}) \|_{\dot{H}^{1} \times L^{2}} = 0,
\end{equation}
where $S(t)$ is the solution operator to the free wave equation $u_{tt} - \Delta u = 0$.

For non-radial initial data, \cite{grillakis1990regularity} proved global well-posedness and persistence of regularity for $(\ref{1.1})$ with smooth initial data. This result was extended to higher dimensions by \cite{grillakis1992regularity} and \cite{MR1266760}. Later, \cite{MR1283026} proved global well-posedness for initial data in the energy space. Using the profile decomposition of \cite{bahouri1999high}, \cite{MR1671997} proved global well-posedness and scattering for the quintic nonlinear wave equation $(\ref{1.1})$ with initial data in the energy space. See also \cite{MR1666973} for the Klein-Gordon equation. See \cite{bahouri1998decay} for a decay estimate for $\| u(t) \|_{L^{6}}$.

The proof of scattering is equivalent to the proof that for any $(u_{0}, u_{1}) \in \dot{H}^{1} \times L^{2}$, $(\ref{1.1})$ has a global solution $u$ that satisfies the bound
\begin{equation}\label{1.8.1}
\| u \|_{L_{t,x}^{8}(\mathbb{R} \times \mathbb{R}^{3})} \lesssim_{E(u_{0}, u_{1})} 1.
\end{equation}
(The argument proving this fact may be found in \cite{dodson2018global}, for example.) However, for non-radial data, the best spacetime bounds for the scattering size are much weaker than the bounds for the radial data, $(\ref{1.6})$.
\begin{theorem}\label{t1.1}
Let $(u_{0}, u_{1}) \in \dot{H}_{x}^{1}(\mathbb{R}^{3}) \times L_{x}^{2}(\mathbb{R}^{3})$ be initial data with the energy bound
\begin{equation}\label{1.9}
\int_{\mathbb{R}^{3}} \frac{1}{2} |u_{1}|^{2} + \frac{1}{2} |\nabla u_{0}|^{2} + \frac{1}{6} |u_{0}|^{6} dx \leq E.
\end{equation}
Then there exists a unique global solution $u \in C_{t}^{0} H_{x}^{1}(\mathbb{R} \times \mathbb{R}^{3}) \cap C_{t}^{1} L_{x}^{2}(\mathbb{R} \times \mathbb{R}^{3}) \cap L_{t}^{4} L_{x}^{12}(\mathbb{R} \times \mathbb{R}^{3})$ with the spacetime bound
\begin{equation}\label{1.10}
\| u \|_{L_{t}^{4} L_{x}^{12}(\mathbb{R} \times \mathbb{R}^{3})} \leq C(1 + E)^{C E^{105/2}},
\end{equation}
for some absolute constant $C > 0$. Interpolating $(\ref{1.10})$ with $\| u \|_{L_{t}^{\infty} L_{x}^{6}(\mathbb{R} \times \mathbb{R}^{3})} \leq E^{1/6}$ gives
\begin{equation}\label{1.10.1}
\| u \|_{L_{t,x}^{8}(\mathbb{R} \times \mathbb{R}^{3})} \leq C(1 + E)^{C E^{105/2}},
\end{equation}
\end{theorem}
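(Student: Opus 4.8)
I would prove Theorem~\ref{t1.1} in two stages. The first stage is the conditional estimate named in the abstract: for a solution $u$ of \eqref{1.1} that additionally lies in $L_t^\infty L_x^2\cap L_t^\infty\dot H_x^2$, one bounds $\|u\|_{L_t^4L_x^{12}(\mathbb R\times\mathbb R^3)}$ by a function $\Phi\bigl(E,\|u\|_{L_t^\infty L_x^2},\|u\|_{L_t^\infty\dot H_x^2}\bigr)$ that has the same shape in $E$ as the right side of \eqref{1.10} and that feels the two auxiliary norms only weakly, at worst through their logarithms. The second stage removes the auxiliary norms: approximate $(u_0,u_1)\in\dot H^1\times L^2$ by Schwartz data of essentially energy $E$; the corresponding solutions are at least locally $H^2$ by persistence of regularity, and since persistence of regularity bounds the $L^2$ and $\dot H^2$ norms of an approximant on an interval by a Gronwall-type exponential of its $L_t^4L_x^{12}$-norm there, while $\Phi$ feels those norms only logarithmically, a continuity argument in the $L_t^4L_x^{12}$-norm closes and upgrades this to a global spacetime bound for the approximants depending only on $E$; the stability theory for \eqref{1.1} then transfers this bound to $u$ in the limit, giving \eqref{1.10}.

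For the conditional estimate the engine is the Morawetz inequality \eqref{1.4} together with its spatial translates $\int_I\int\frac{u(t,x)^6}{|x-y|}\,dx\,dt\lesssim E$, uniform in $y\in\mathbb R^3$, proved by the standard multiplier computation (weight $a(x)=|x-y|$, the natural time-derivative multiplier, and Hardy's inequality). Using Strichartz estimates and the local theory of \cite{MR153967,MR731347,MR631403} — which, with explicit constants, say that $\|u\|_{L_t^4L_x^{12}(J)}\le\eta_0$ on an interval $J$ controls all relevant Strichartz norms of $u$ on $J$ and propagates $\dot H^2$ regularity there — one runs a long-time, frequency-localized bootstrap. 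Partition $\mathbb R$ into $K$ maximal intervals $J_1,\dots,J_K$ with $\|u\|_{L_t^4L_x^{12}(J_k)}=\eta_0$ for a small absolute constant $\eta_0$; since $\|u\|_{L_t^4L_x^{12}(\mathbb R\times\mathbb R^3)}^4=K\eta_0^4$, it suffices to bound $K$. On each $J_k$ one extracts a bubble — a scale $r_k$ and a center $x_k$ with $\int_{J_k}\int_{|x-x_k|\le r_k}u^6\,dx\,dt\gtrsim_{\eta_0}1$ — the extraction being made rigorous by truncating $u$ at frequency $N\sim\|u\|_{L_t^\infty\dot H_x^2}$, handling the conjugate low frequencies via $\|u\|_{L_t^\infty L_x^2}$, and using finite speed of propagation to localize in space; this is precisely where the two auxiliary norms enter. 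Feeding each bubble, translated to $x_k$ and rescaled by \eqref{1.14} to normalize $r_k$, into the translated Morawetz estimate bounds a weighted count of the $J_k$ by $E$.

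The main obstacle is converting this bubble count into the explicit bound \eqref{1.10}. One cannot bound $K$ in a single application of the Morawetz budget; instead one runs an outer iteration, reminiscent of induction on energy, whose number of steps is polynomial in $E$ with the large exponent, each step both consuming a definite fraction of an $E$-dependent budget and losing a factor $(1+E)^{O(1)}$ through Bernstein, Sobolev, and interpolation inequalities — which compounds to $K\le C(1+E)^{CE^{105/2}}$. No single estimate here is deep; the difficulty is organizing the iteration so that the $E$-dependence stays of exactly this form (and so that, for the second stage, the dependence on $\|u\|_{L_t^\infty L_x^2}$ and $\|u\|_{L_t^\infty\dot H_x^2}$ stays logarithmic). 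Granting \eqref{1.10}, the bound \eqref{1.10.1} is immediate: interpolating in $x$, $\|u(t)\|_{L_x^8}\le\|u(t)\|_{L_x^{12}}^{1/2}\|u(t)\|_{L_x^6}^{1/2}$, so
\[
\|u\|_{L_{t,x}^8(\mathbb R\times\mathbb R^3)}^8\le\|u\|_{L_t^\infty L_x^6}^4\,\|u\|_{L_t^4L_x^{12}}^4\le E^{2/3}\bigl(C(1+E)^{CE^{105/2}}\bigr)^4,
\]
and taking eighth roots and enlarging $C$ absorbs the factor $E^{1/12}$.
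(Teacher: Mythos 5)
The paper does not reprove Theorem~\ref{t1.1}: immediately after stating it, the text says that it ``was proved in \cite{tao2006spacetime}'' by the induction-on-energy method of \cite{bourgain1999global,tao2004global}. That proof works directly with the energy norm $\dot H^1\times L^2$ only; it never introduces $\|u\|_{L_t^\infty L_x^2}$ or $\|u\|_{L_t^\infty\dot H_x^2}$, and the frequency scales used to localize bubbles are selected from the energy distribution of $u$ itself, not from the size of higher Sobolev norms. Your proposal is a genuinely different strategy --- prove a conditional $\Phi(E,A)$ bound under extra regularity (this is essentially Theorem~\ref{t1.2} of the present paper) and then strip $A$ off by Gronwall/persistence-of-regularity plus a continuity argument --- and as stated it has a fatal gap.

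The gap is the assertion that the conditional estimate $\Phi$ ``feels the two auxiliary norms only weakly, at worst through their logarithms.'' Nothing in your bubble-extraction sketch produces such a dependence, and the paper's own Theorem~\ref{t1.2}, which is precisely the conditional estimate you need, depends on $A=\|u\|_{L_t^\infty L_x^2}+\|u\|_{L_t^\infty\dot H_x^2}$ like $\exp\!\bigl(CE^{85/6}E^{13/14}A^{11}\bigr)$. The logarithmic dependence is the crux: persistence of regularity gives, on $[0,T]$,
\begin{equation*}
A(T)\ \lesssim\ A(0)\,\exp\!\Bigl(C\|u\|_{L_t^4L_x^{12}([0,T]\times\mathbb R^3)}^{4}\Bigr),
\end{equation*}
so the self-consistency inequality your continuity argument must close is
\begin{equation*}
M\ \le\ \Phi\bigl(E,\ C\exp(CM^4)\bigr),\qquad M:=\|u\|_{L_t^4L_x^{12}([0,T]\times\mathbb R^3)}.
\end{equation*}
If $\Phi(E,A)\lesssim C(E)(\log A)^{\alpha}$ with $4\alpha<1$ this closes and gives $M\lesssim C(E)^{1/(1-4\alpha)}$; but with any polynomial, let alone exponential, dependence on $A$ the right side grows faster than the left and the argument cannot close. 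You also supply no mechanism that would force the frequency cutoff $N\sim A$ to enter $\Phi$ only logarithmically: in the interaction Morawetz scheme of the present paper the analogue of $N$ is baked into the number $J$ of dyadic scales, and $\mathcal T=e^J R_0$, so $A$ lands in the exponent of the exponent. Tao's induction-on-energy proof sidesteps the problem entirely by never introducing $A$; a correct version of your proposal would have to reproduce that feature, at which point stage two becomes unnecessary. The final interpolation from \eqref{1.10} to \eqref{1.10.1} is correct.
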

This theorem was proved in \cite{tao2006spacetime}. The proof used the induction on energy argument. This argument was previously used in \cite{bourgain1999global} to prove scattering for the radially symmetric, nonlinear Schr{\"o}dinger equation in dimensions three and four. This argument was also used in \cite{tao2004global}, proving scattering in dimensions five and higher for the radially symmetric nonlinear Schr{\"o}dinger equation. Explicit scattering size bounds were also obtained in \cite{tao2004global}.\medskip

In this note we obtain the following bounds for the size of the spacetime integral of a solution to $(\ref{1.1})$.
\begin{theorem}\label{t1.2}
Let $(u_{0}, u_{1}) \in \dot{H}_{x}^{1}(\mathbb{R}^{3}) \times L_{x}^{2}(\mathbb{R}^{3})$ be initial data with the energy bound
\begin{equation}\label{1.11}
\int_{\mathbb{R}^{3}} \frac{1}{2} |u_{1}|^{2} + \frac{1}{2} |\nabla u_{0}|^{2} + \frac{1}{6} |u_{0}|^{6} dx \leq E.
\end{equation}
Also suppose that the solution to $(\ref{1.1})$ has the a priori bounds
\begin{equation}\label{1.12}
\| (u, u_{t}) \|_{L_{t}^{\infty} L_{x}^{2} \times \dot{H}^{-1}(\mathbb{R} \times \mathbb{R}^{3})} \leq A, \qquad \| (u, u_{t}) \|_{L_{t}^{\infty} \dot{H}^{2} \times \dot{H}^{1}(\mathbb{R} \times \mathbb{R}^{3})} \leq A,
\end{equation}
for some $A < \infty$. Then there exists a unique global solution $u \in C_{t}^{0} H_{x}^{1}(\mathbb{R} \times \mathbb{R}^{3}) \cap C_{t}^{1} L_{x}^{2}(\mathbb{R} \times \mathbb{R}^{3}) \cap L_{t,x}^{8}(\mathbb{R} \times \mathbb{R}^{3})$ with the spacetime bound
\begin{equation}\label{1.13}
\| u \|_{L_{t,x}^{8}(\mathbb{R} \times \mathbb{R}^{3})} \leq C E^{4/7} A \exp(C E^{85/6} E^{13/14} A^{11}),
\end{equation}
for some absolute constant $C > 0$.
\end{theorem}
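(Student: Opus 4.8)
\emph{Proof strategy.} By time-reversal symmetry it suffices to bound $u$ on $[0,\infty)\times\mathbb R^{3}$; and since, by the interpolation already indicated in the statement, $L_{t,x}^{8}$ is controlled by the critical Strichartz norm $L_{t}^{5}L_{x}^{10}$ together with the energy, the plan is to bound $\|u\|_{L_{t}^{5}L_{x}^{10}([0,\infty)\times\mathbb R^{3})}$ and then interpolate. First I would unpack the a priori hypothesis $(\ref{1.12})$: interpolating its two bounds with conservation of energy $(\ref{1.2})$ (using Littlewood--Paley projections for the homogeneous norms) gives uniform-in-time estimates $\sup_{t}\|u(t)\|_{\dot H_{x}^{s}}\lesssim E^{a(s)}A^{b(s)}$ for $s\in[0,2]$ and $\sup_{t}\|u_{t}(t)\|_{\dot H_{x}^{s}}\lesssim E^{a'(s)}A^{b'(s)}$ for $s\in[-1,1]$, with explicit exponents. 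In particular $\dot H_{x}^{1}\cap\dot H_{x}^{2}\hookrightarrow L_{x}^{\infty}$ yields $\|u\|_{L_{t,x}^{\infty}}\lesssim E^{1/4}A^{1/2}$, and both $u$ and $u_{t}$ lie in $L_{t}^{\infty}L_{x}^{2}$ with norm $\lesssim A$. From this I would record the nonlinear bounds $\sup_{t}\|u^{5}(t)\|_{L_{x}^{2}}\lesssim\|u\|_{L_{t,x}^{\infty}}^{4}\|u\|_{L_{t}^{\infty}L_{x}^{2}}\lesssim EA^{3}$ and $\sup_{t}\|u^{5}(t)\|_{\dot H_{x}^{1}}\lesssim E^{3/2}A^{2}$.

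The heart of the matter is a \emph{global} spacetime estimate with a weight that is integrable in time. The classical Morawetz estimate $(\ref{1.4})$ holds for all finite-energy solutions, and averaging its translates $\int_{0}^{\infty}\!\!\int|x-y|^{-1}u^{6}\,dx\,dt\lesssim E$ against $(1+|y|)^{-3-\eps}\,dy$ upgrades it to $\int_{0}^{\infty}\!\!\int(1+|x|)^{-1}u^{6}\,dx\,dt\lesssim E$. This weight still degenerates at infinity, so I would instead run an \emph{interaction} Morawetz argument, which is made possible precisely by the finiteness of the $L_{x}^{2}$ masses in $(\ref{1.12})$: with a quantity of the form
\[
M(t)=\iint_{\mathbb R^{3}\times\mathbb R^{3}}u(t,y)^{2}\,\frac{x-y}{|x-y|}\cdot\big(u_{t}\nabla u\big)(t,x)\,dx\,dy+(\text{lower-order corrections}),
\]
one has $|M(t)|\lesssim C(E,A)$ uniformly in $t$ (using the $L^{2}$ and $\dot H^{-1}$ control of $(u,u_{t})$ together with the energy), while computing $\frac{d}{dt}M(t)$ from $(\ref{1.1})$ and integrating by parts in the usual way should give $\frac{d}{dt}M(t)\ge c\,w(t)-\mathcal E(t)$, where $w(t)\ge 0$ is an interaction density dominating the dangerous part of the quintic nonlinearity and $\mathcal E(t)$ collects error terms controlled by the uniform bounds of Step 1 and, where needed, the averaged Morawetz bound. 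Integrating over $[0,\infty)$ then yields $\int_{0}^{\infty}w(t)\,dt\le C(E,A)$ with $C(E,A)$ an explicit polynomial in $E$ and $A$.

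With that in hand I would close by a Gronwall-type bootstrap. Writing Duhamel's formula $u(t)=S(t)(u_{0},u_{1})-\int_{0}^{t}|\nabla|^{-1}\sin((t-s)|\nabla|)u^{5}(s)\,ds$ and setting $\Lambda(T)=\|u\|_{L_{t}^{5}L_{x}^{10}([0,T]\times\mathbb R^{3})}$ — finite for each $T<\infty$ because $(\ref{1.12})$ keeps the solution global and non-concentrating on compact time sets — the Strichartz estimate gives $\Lambda(T)\le CE^{1/2}+C\int_{0}^{T}\|u(t)\|_{L_{x}^{10}}^{5}\,dt$. Redistributing the five factors of $u$ in the integrand — several into $L_{t,x}^{\infty}$ via Step 1, some bundled with the interaction weight so that what remains is $\lesssim w(t)$, and the rest reassembled into a power of the local critical norm — should produce
\[
\Lambda(T)\le C_{1}(E,A)+C_{2}(E,A)\int_{0}^{T}w(t)\big(1+\Lambda(t)\big)\,dt,
\]
with $C_{1},C_{2}$ explicit and $T$-independent; since $\int_{0}^{\infty}w\le C(E,A)$, Gronwall's inequality bounds $\Lambda(T)$ by $C_{1}(E,A)\exp(C_{2}(E,A)C(E,A))$ uniformly in $T$. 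Hence $\|u\|_{L_{t}^{5}L_{x}^{10}(\mathbb R\times\mathbb R^{3})}<\infty$, which upgrades through the standard energy-critical local theory to global existence in $C_{t}^{0}H_{x}^{1}\cap C_{t}^{1}L_{x}^{2}\cap L_{t}^{4}L_{x}^{12}$; interpolating with $\|u\|_{L_{t}^{\infty}L_{x}^{6}}\le E^{1/6}$ gives the $L_{t,x}^{8}$ bound, and tracking the powers of $E$ and $A$ through all of the above produces the explicit constant in $(\ref{1.13})$.

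The hardest part will be Step 2: establishing the interaction-Morawetz monotonicity with controllable errors. For the wave equation this is substantially more delicate than for Schr\"odinger — the relevant current is $u_{t}\nabla u$ rather than $\mathrm{Im}(\bar u\nabla u)$, differentiating the $L_{x}^{2}$ density $u^{2}$ produces cross terms $\int uu_{t}$ that must be reabsorbed into the corrections, and the transition region carries $u_{t}^{2}$, $|\nabla u|^{2}$ and $u^{6}$ terms of indefinite sign — and it is exactly here that all four components of $(\ref{1.12})$ get used: the $L^{2}$ and $\dot H^{-1}$ control to keep $M(t)$ and its corrections bounded, and the $\dot H^{1}$ and $\dot H^{2}$ control to dominate $\mathcal E(t)$. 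A secondary, more clerical obstacle is arranging the redistribution in Step 3 so that the residual weight is genuinely $\lesssim w(t)$, and then propagating the $E$- and $A$-dependence carefully enough to land on the stated exponents.
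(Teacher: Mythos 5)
Your proposal correctly identifies the interaction Morawetz estimate as the engine of the proof, and the shape of the final answer you predict ($C_1\exp(C_2\,C(E,A))$) is consistent with the paper's. But the key step — Step 2, where you claim a \emph{global} $L^1_t$ Morawetz bound $\int_0^\infty w(t)\,dt\le C(E,A)$ — is not something the paper establishes, and I do not believe it can be had from the hypotheses in the form you set up. Your candidate potential uses the mass density $u^2(t,y)$ with the unit-vector weight $\frac{x-y}{|x-y|}$, which does give a uniformly bounded $M(t)$. The problem is in $\frac{d}{dt}M(t)$: for the wave equation $\partial_t(u^2)=2uu_t$ has no divergence structure, so the cross term $\iint 2u u_t(y)\,\frac{x-y}{|x-y|}\cdot\langle u_t,\nabla u\rangle(x)\,dx\,dy$ has no sign and no evident cancellation to absorb it. (This is precisely the failure of the Schr\"odinger mechanism, where $\partial_t|u|^2=-2\nabla\cdot\mathrm{Im}(\bar u\nabla u)$ \emph{is} a divergence.) The paper sidesteps this by taking the \emph{energy} density $e(t,y)$ as the weight, whose time derivative $\partial_t e=\nabla_y\cdot\langle u_t,\nabla u\rangle$ does have divergence structure — but then the natural conjugate weight is $(x-y)$, not the unit vector, and the resulting Morawetz potential $M_R(t)$ scales like $R E^2$, i.e.\ is \emph{not} uniformly bounded. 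Consequently the paper's integrated bound (eq.~(\ref{5.19})--(\ref{5.20})) is of the form $\lesssim \frac{\mathcal T}{J}E^2$, \emph{growing linearly in the time window} $\mathcal T$, with the gain coming entirely from the $1/J$ factor obtained by averaging $M_R$ over $J$ dyadic scales $R\in[R_0,e^J R_0]$. Since $\mathcal T\sim e^J R_0$, making $J$ large enough to overcome the loss forces $\mathcal T$ exponentially large — that is where the exponential in $(\ref{1.13})$ really comes from, not from a Gronwall exponential.

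Because of this, the paper's global architecture is different from your Gronwall bootstrap and you would need to replace Step 3 as well. The paper: (i) partitions $\mathbb R$ into $\sim E^4/\epsilon^8$ intervals $I_j$ on which the free evolution is small in $L^8_{t,x}$; (ii) proves the rough a priori polynomial growth $\|u\|_{L^8_{t,x}([t_0,t_0+T])}\lesssim A\langle T\rangle^{1/8}$ from the $\dot H^2\times\dot H^1$ hypothesis and Sobolev embedding (Theorem~\ref{t2.1}); (iii) splits the Duhamel term on each $I_j$ into a far-past piece $\int_0^{t_j-T}$, killed by the pointwise dispersive decay $|\int S(t-\tau)(0,u^5)|\lesssim E^{5/6}T^{-1/2}$ plus an energy-flux argument (Lemma~\ref{l3.1}), and a near-past piece $\int_{t_j-T}^{t_j}$; (iv) uses the averaged interaction Morawetz to \emph{pigeonhole} a time $t_j$ within the exponentially long window $\mathcal T$ for which the near-past piece is small, so that past $t_j$ the small-data theory applies. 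The final bound is $E^{4/7}\cdot\sup_j\|u\|_{L^8(I_j)}\lesssim E^{4/7}A\mathcal T^{1/8}$, with $\mathcal T\sim\exp(CE^{85/6}E^{13/14}A^{11})$. Your Step 3 also leaves the ``redistribution'' of the five factors of $u$ against $w(t)$ essentially unaddressed; even with a hypothetical good $w$ this is not automatic, since the natural interaction-Morawetz density controls a weighted $L^6$–$L^2$ bilinear quantity, not directly $\|u(t)\|_{L^{10}_x}^5$. In short: right tool, wrong coordination. To close the argument you either need to find a genuinely bounded-in-time interaction Morawetz potential with a sign-definite derivative for the wave equation (which the paper's choice of energy density and cutoff weight suggests is not straightforward), or adopt the paper's dyadic-averaging-plus-pigeonhole strategy.
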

\begin{remark}
One may use the scaling symmetry $(\ref{1.14})$ to obtain the equality that arises in $(\ref{1.12})$.
\end{remark}

The proof of Theorem $\ref{t1.2}$ follows the line of argument previously used for the nonlinear Schr{\"o}dinger equation in \cite{dodson2017new}, \cite{arora2020scattering}, and especially in \cite{dodson2017new1}. We utilize an interaction Morawetz estimate to show that the energy of a solution must eventually spread out in $\mathbb{R}^{3}$. More precisely, for any $\epsilon > 0$ and $T < \infty$, there exists $\mathcal T(T, \epsilon) < \infty$ such that for some $t_{0} \in [0, \mathcal T(T, \epsilon)]$,
\begin{equation}\label{1.15}
\int_{t_{0}}^{t_{0} + T} \int |u(t,x)|^{8} dx dt \leq \epsilon.
\end{equation}
Combining this fact with some dispersive estimates for the linear wave equation implies Theorem $\ref{t1.2}$.

\section{Local well-posedness and small data arguments}
As was mentioned in the introduction, \cite{grillakis1990regularity} proved that $(\ref{1.1})$ is globally well-posed for initial data in the energy space. Additionally, the a priori bound
\begin{equation}\label{2.1}
\| (u, u_{t}) \|_{L_{t}^{\infty} \dot{H}^{2} \times \dot{H}^{1}(\mathbb{R} \times \mathbb{R}^{3})} \leq A,
\end{equation}
implies that 
\begin{theorem}\label{t2.1}
For any $t_{0} \in \mathbb{R}$ and $T > 0$,
\begin{equation}\label{2.2}
\| u \|_{L_{t,x}^{8}([0, T] \times \mathbb{R}^{3})}^{8} \lesssim A^{8} \langle T \rangle.
\end{equation}
\end{theorem}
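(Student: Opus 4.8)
The plan is to prove \eqref{2.2} by a crude, pointwise‑in‑time estimate: first bound $u(t)$ in $L_x^\infty$ using the $\dot H^2$ control \eqref{2.1}, then pair this with the $L^\infty_t L^2_x$ bound on $u$ from \eqref{1.12} via H\"older's inequality to control $\|u(t)\|_{L_x^8}$ uniformly in $t$, and finally integrate in $t$ over $[0,T]$.

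Concretely, I would record the Sobolev embedding $\dot H^2(\mathbb{R}^3)\cap L^2(\mathbb{R}^3)\hookrightarrow L^\infty(\mathbb{R}^3)$, which holds precisely because $2>3/2$ in dimension three; in scale‑invariant form it reads
\[
\|f\|_{L_x^\infty(\mathbb{R}^3)}\lesssim \|f\|_{L_x^2(\mathbb{R}^3)}^{1/4}\,\|\nabla^2 f\|_{L_x^2(\mathbb{R}^3)}^{3/4}.
\]
Applying this with $f=u(t)$ and invoking \eqref{2.1} together with the $L^\infty_t L^2_x$ bound in \eqref{1.12} gives $\|u(t)\|_{L_x^\infty}\lesssim A$, uniformly in $t$. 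Then, for each fixed $t$,
\[
\int_{\mathbb{R}^3}|u(t,x)|^8\,dx\le \|u(t)\|_{L_x^\infty}^6\int_{\mathbb{R}^3}|u(t,x)|^2\,dx\lesssim A^6\cdot A^2=A^8,
\]
and integrating over $t\in[0,T]$ yields $\|u\|_{L_{t,x}^8([0,T]\times\mathbb{R}^3)}^8\lesssim A^8 T\le A^8\langle T\rangle$, which is \eqref{2.2}. (Alternatively one can skip the $L^\infty$ step and apply the Gagliardo--Nirenberg inequality $\|f\|_{L_x^8(\mathbb{R}^3)}\lesssim\|f\|_{L_x^2}^{7/16}\|\nabla^2 f\|_{L_x^2}^{9/16}$ directly, with the same outcome.) The estimate holds on any interval $[t_0,t_0+T]$ since the hypotheses \eqref{1.12} are global and time‑translation invariant, which is the form asserted in Theorem~\ref{t2.1}.

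I do not anticipate any genuine obstacle here: the argument is pure Sobolev embedding and H\"older, making no use of the equation beyond the global existence and persistence of regularity of \cite{grillakis1990regularity} needed for the norms to make sense. The only points worth noting are that the embedding $H^2\hookrightarrow L^\infty$ is special to $d\le 3$ (in higher dimensions one would have to measure more derivatives, or argue at a different endpoint), and---more importantly---that \eqref{2.2} is a wasteful bound that grows linearly in $T$. Removing this $T$‑dependence is exactly what the remainder of the paper accomplishes, using the interaction Morawetz estimate to force the local energy to disperse, cf.\ \eqref{1.15}, and then upgrading to the $T$‑independent bound \eqref{1.13}.
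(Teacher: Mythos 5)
Your proof is correct, and it takes a genuinely different and more elementary route than the paper's. The paper bounds the free evolution $\|S(t-t_0)(u(t_0),u_t(t_0))\|_{L^8_x}\lesssim A$ pointwise in time via the embedding $\dot H^{9/8}\hookrightarrow L^8$ and the unitarity of $S(t)$, then invokes the small-data perturbation theory of Theorem~\ref{t2.2} on subintervals $I$ of length $\lesssim_{E,A}1$ to conclude $\|u\|_{L^8_{t,x}(I\times\mathbb{R}^3)}\lesssim 1$, and finally sums over $\lesssim A^8\langle T\rangle$ such subintervals. You instead apply Gagliardo--Nirenberg directly to the nonlinear solution $u(t)$ rather than to the free flow, obtaining $\|u(t)\|_{L^\infty_x}\lesssim A$ (or $\|u(t)\|_{L^8_x}\lesssim A$) uniformly in $t$ from \eqref{1.12} alone, and then just integrate. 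This avoids the small-data bootstrap entirely and so uses nothing about the equation except persistence of the $H^2$ regularity; it also has no hidden $\epsilon(E)$ threshold in the implicit constant, since the partition step is gone. The one thing the paper's argument buys that yours does not give quite as immediately is the remark following the theorem, which asserts the bound $\|u\|_{L^p_t L^q_x}\lesssim A\langle T\rangle^{1/p}$ for every $\dot H^1$-admissible Strichartz pair $(p,q)$; the small-data argument yields this wholesale via Strichartz estimates on each subinterval, whereas your pointwise-in-time approach delivers it only for $q\le\infty$ with $p=q$ directly, though one can of course recover the general case by interpolating $\|u(t)\|_{L^\infty_x}\lesssim A$, $\|u(t)\|_{L^2_x}\lesssim A$, and $\|u(t)\|_{L^6_x}\lesssim E^{1/6}$. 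For the stated estimate \eqref{2.2} itself, your argument is complete and cleaner.
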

\begin{proof}
This follows from the Sobolev embedding theorem. Indeed, since $S(t)$ is a unitary operator on $\dot{H}^{s} \times \dot{H}^{s - 1}$, for any $s \in \mathbb{R}$, the bounds $(\ref{2.1})$ and $(\ref{1.12})$ imply
\begin{equation}\label{2.3}
\| S(t - t_{0})(u(t_{0}), u_{t}(t_{0})) \|_{L^{8}} \lesssim \| (u(t_{0}), u_{t}(t_{0})) \|_{\dot{H}^{9/8} \times \dot{H}^{1/8}} \lesssim A.
\end{equation}
Taking $|I|$ sufficiently small depending on $E$ and $A$, small data arguments imply
\begin{equation}\label{2.4}
\| u \|_{L_{t,x}^{8}(I \times \mathbb{R}^{3})} \lesssim 1.
\end{equation}
Since the bound $(\ref{2.1})$ is uniform on $\mathbb{R}$, we can partition $[t_{0}, t_{0} + T]$ into $\lesssim A^{8} \langle T \rangle$ such intervals for which $(\ref{2.4})$, which proves $(\ref{2.2})$.
\end{proof}
\begin{remark}
In fact, for any $\dot{H}^{1}$-admissible pair $(p, q)$ in $\mathbb{R}^{3}$, we have proved
\begin{equation}\label{2.4.1}
\| u \|_{L_{t}^{p} L_{x}^{q}([t_{0}, t_{0} + T] \times \mathbb{R}^{3})} \lesssim A \langle T \rangle^{1/p}.
\end{equation}
\end{remark}

We can prove a small data well-posedness result.
\begin{theorem}\label{t2.2}
There exists $\epsilon(E) > 0$ such that if $I = [t_{0}, t_{0} + T]$ is an interval for which
\begin{equation}\label{2.5}
\| S(t - t_{0})(u(t_{0}), u_{t}(t_{0})) \|_{L_{t,x}^{8}(I \times \mathbb{R}^{3})} \leq \epsilon,
\end{equation}
and 
\begin{equation}\label{2.5.1}
\| u \|_{L_{t}^{\infty} \dot{H}^{1}(I \times \mathbb{R}^{3})} \leq E^{1/2},
\end{equation}
then
\begin{equation}\label{2.6}
\| u \|_{L_{t,x}^{8}(I \times \mathbb{R}^{3})} \leq 2 \epsilon.
\end{equation}
\end{theorem}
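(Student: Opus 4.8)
The plan is to run a standard continuity (bootstrap) argument, but on the norm $\|u\|_{L^5_t L^{10}_x(I)}$ rather than on $\|u\|_{L^8_{t,x}(I)}$ directly. The reason is that, among the $\dot H^1$-admissible pairs in $\mathbb{R}^3$, it is $(5,10)$ — not $(8,8)$ — that is exactly matched to the quintic nonlinearity, through the identity $\|u^5\|_{L^1_t L^2_x}=\|u\|_{L^5_t L^{10}_x}^5$; and $(1,2)$ is dual to the energy pair $(\infty,2)$, so $L^1_t L^2_x$ is precisely the space in which the wave Duhamel operator recovers control of an $\dot H^1$-admissible Strichartz norm. Starting from the Duhamel formula
\[
u(t) = S(t-t_0)(u(t_0),u_t(t_0)) - \int_{t_0}^{t} \frac{\sin((t-s)\sqrt{-\Delta})}{\sqrt{-\Delta}}\, u(s)^5 \, ds ,
\]
the Strichartz estimates together with the Christ--Kiselev lemma (applicable since the relevant time exponents $5$ and $8$ both exceed $1$) give, for every subinterval $J=[t_0,b]\subseteq I$,
\[
\|u\|_{L^5_t L^{10}_x(J)} \le C\,\| S(t-t_0)(u(t_0),u_t(t_0)) \|_{L^5_t L^{10}_x(J)} + C\, \|u\|_{L^5_t L^{10}_x(J)}^{5},
\]
and likewise with $L^5_t L^{10}_x$ replaced on the left by $L^8_{t,x}$.

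The second step is to transfer the smallness in $(\ref{2.5})$ to the $L^5_t L^{10}_x$ norm of the free evolution. Since $(8,8)$ and $(4,12)$ are both $\dot H^1$-admissible in $\mathbb{R}^3$, Hölder's inequality in the space variable and then in the time variable yields the interpolation $\|v\|_{L^5_t L^{10}_x} \le \|v\|_{L^8_{t,x}}^{2/5}\,\|v\|_{L^4_t L^{12}_x}^{3/5}$. Applying the global Strichartz estimate for the pair $(4,12)$ and conservation of energy, $\| S(t-t_0)(u(t_0),u_t(t_0)) \|_{L^4_t L^{12}_x(\mathbb{R}\times\mathbb{R}^3)} \lesssim \|(u(t_0),u_t(t_0))\|_{\dot H^1 \times L^2} \lesssim E^{1/2}$, while $(\ref{2.5})$ bounds the $L^8_{t,x}$ factor by $\epsilon$; hence
\[
\| S(t-t_0)(u(t_0),u_t(t_0)) \|_{L^5_t L^{10}_x(I)} \le C\, \epsilon^{2/5} E^{3/10} =: \delta ,
\]
which can be made as small as we wish by taking $\epsilon$ small depending on $E$.

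The third step is the continuity argument itself. By Theorem~\ref{t2.1} — more precisely the remark following it, applied to the $\dot H^1$-admissible pair $(5,10)$ — the quantity $f(b):=\|u\|_{L^5_t L^{10}_x([t_0,b])}$ is \emph{finite} for every $b\in I$ (this is essential, as $|I|$ is not assumed small); it is continuous and nondecreasing in $b$, with $f(t_0)=0$. Combining the two previous steps gives $f(b)\le C\delta + Cf(b)^5$ for all $b\in I$. For $\delta$ small (absolutely), the set $\{x\ge 0 : x\le C\delta + Cx^5\}$ splits into an interval $[0,x_-]$ with $x_-\le 2C\delta$ and a far component $[x_+,\infty)$ with $x_+$ bounded below by an absolute constant; since $f$ is continuous, vanishes at $t_0$, and takes values in this set, the intermediate value theorem forces $f\le x_-$ throughout $I$, i.e. $\|u\|_{L^5_t L^{10}_x(I)}\le 2C\delta$.

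Finally, substituting this bound into the $L^8_{t,x}$ version of the Duhamel estimate,
\[
\|u\|_{L^8_{t,x}(I)} \le \| S(t-t_0)(u(t_0),u_t(t_0)) \|_{L^8_{t,x}(I)} + C\,\|u\|_{L^5_t L^{10}_x(I)}^{5} \le \epsilon + C(2C\delta)^5 ,
\]
and since $\delta^5\sim \epsilon^2 E^{3/2}$ up to constants, the last term is $\le \epsilon$ as soon as $\epsilon\le \epsilon(E)$ for a suitable $\epsilon(E)>0$; this is $(\ref{2.6})$. I expect the only genuinely non-mechanical points to be (i) the choice to bootstrap in $L^5_t L^{10}_x$ rather than $L^8_{t,x}$ — one cannot close the estimate in $L^8_{t,x}$ alone, since the quintic term naturally lives in $L^1_t L^2_x$, which feeds back only into $L^5_t L^{10}_x$, and the free evolution is small precisely in $L^8_{t,x}$, so one must borrow the merely-bounded $L^4_t L^{12}_x$ norm to transfer the smallness — and (ii) remembering to invoke Theorem~\ref{t2.1} to supply the a priori finiteness that legitimises the continuity argument on a possibly long interval $I$.
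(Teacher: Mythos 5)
Your proof is correct but takes a genuinely different route from the paper. You bootstrap in the standard $\dot{H}^1$-admissible norm $L^5_t L^{10}_x$, exploiting the exact pairing $\|u^5\|_{L^1_t L^2_x} = \|u\|_{L^5_t L^{10}_x}^5$ with the dual Strichartz estimate (plus Christ--Kiselev to restrict to sub-intervals), and you transfer the $L^8_{t,x}$ smallness of the free evolution to $L^5_t L^{10}_x$ by interpolating against the Strichartz-bounded $L^4_t L^{12}_x$ norm. The paper instead closes the bootstrap directly in $L^8_{t,x}$ by a less elementary nonlinear estimate: it bounds $\||\nabla|^{1/2} u^5\|_{L^{16/9}_t L^{16/13}_x}$ by $\|\nabla u\|_{L^\infty_t L^2_x}^{1/2} \|u\|_{L^8_{t,x}}^{9/2}$ (a fractional-Leibniz/interpolation estimate using $(\ref{2.5.1})$) and then pushes this through an interpolated dispersive estimate, Hardy--Littlewood--Sobolev in time, and the spacetime Sobolev embedding $\dot{W}^{1/4,16/3}_{t,x}(\mathbb{R}^4) \hookrightarrow L^8_{t,x}$. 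Both routes must import one extra norm beyond $L^8_{t,x}$: you borrow $\|S(t-t_0)(u(t_0),u_t(t_0))\|_{L^4_t L^{12}_x} \lesssim E^{1/2}$, which requires $\|u_t(t_0)\|_{L^2} \lesssim E^{1/2}$ --- available from energy conservation in context, but not from $(\ref{2.5.1})$ alone --- while the paper borrows only $\|\nabla u\|_{L^\infty_t L^2_x}^{1/2}$, directly from $(\ref{2.5.1})$. Your remark (i) that ``one cannot close the estimate in $L^8_{t,x}$ alone'' is therefore overstated: the paper's bootstrap does run on the single quantity $\|u\|_{L^8_{t,x}}$, with the energy bound playing the role of a fixed coefficient rather than a second bootstrap norm. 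Your version is more elementary (no fractional derivatives on the nonlinearity, no interpolated dispersive estimate) but yields a smaller threshold, roughly $\epsilon(E) \sim E^{-3/2}$ versus the paper's $\epsilon(E) \sim E^{-1/14}$; this is immaterial to Theorem \ref{t2.2} as stated, but the specific power $E^{-1/14}$ is used in Section 3 and propagates into the exponents of Theorem \ref{t1.2}. Finally, your point (ii) --- that Theorem \ref{t2.1} is needed to guarantee a priori finiteness of the bootstrap norm on a long interval, so that the continuity argument is legitimate --- is a genuine subtlety that the paper passes over silently.
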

\begin{proof}
By Duhamel's principle, for any $t \in I$,
\begin{equation}\label{2.7}
u(t) = S(t - t_{0})(u(t_{0}), u_{t}(t_{0})) - \int_{t_{0}}^{t} S(t - \tau)(0, u^{5}) d\tau.
\end{equation}
By interpolation,
\begin{equation}\label{2.8}
\| |\nabla|^{1/2} u^{5} \|_{L_{t}^{16/9} L_{x}^{16/13}(I \times \mathbb{R}^{3})} \lesssim \| \nabla u \|_{L_{t}^{\infty} L_{x}^{2}}^{1/2} \| u \|_{L_{t,x}^{8}(I \times \mathbb{R}^{3})}^{9/2}.
\end{equation}
Then interpolating the estimates,
\begin{equation}\label{2.9}
\| S(t)(0, F) \|_{L_{x}^{\infty}} \lesssim \frac{1}{t} \| \nabla F \|_{L^{1}}, \qquad \| S(t)(0, F) \|_{L_{x}^{2}} \lesssim \| |\nabla|^{-1} F \|_{L^{2}},
\end{equation}
the Hardy-Littlewood-Sobolev inequality implies
\begin{equation}\label{2.10}
\| |\nabla|^{1/4}, |\partial_{t}|^{1/4} \int_{t_{0}}^{t} S(t - \tau)(0, u^{5}) d\tau \|_{L_{t,x}^{16/3}(I \times \mathbb{R}^{3})} \lesssim  \| \nabla u \|_{L_{t}^{\infty} L_{x}^{2}}^{1/2} \| u \|_{L_{t,x}^{8}(I \times \mathbb{R}^{3})}^{9/2}.
\end{equation}
Then by the Sobolev embedding theorem,
\begin{equation}\label{2.11}
\| \int_{t_{0}}^{t} S(t - \tau)(0, u^{5}) d\tau \|_{L_{t,x}^{8}(I \times \mathbb{R}^{3})} \lesssim  \| \nabla u \|_{L_{t}^{\infty} L_{x}^{2}}^{1/2} \| u \|_{L_{t,x}^{8}(I \times \mathbb{R}^{3})}^{9/2}.
\end{equation}
Choosing $\epsilon(E)$ sufficiently small, $\epsilon(E) \sim E^{-1/14}$ will do, the proof is complete.
\end{proof}


\section{A Reduction of the solution}

Strichartz estimates imply that
\begin{equation}\label{3.2}
\| S(t)(u_{0}, u_{1}) \|_{L_{t,x}^{8}(\mathbb{R} \times \mathbb{R}^{3})} \lesssim E^{1/2}.
\end{equation}
Next, partition $\mathbb{R}$ into $\sim \frac{E^{4}}{\epsilon^{8}}$ subintervals $I_{j}$ such that
\begin{equation}\label{3.3}
\| S(t)(u_{0}, u_{1}) \|_{L_{t,x}^{8}(I_{j} \times \mathbb{R}^{3})} \leq \frac{\epsilon}{4},
\end{equation}
where $\epsilon \sim E^{-1/14}$. Then by the triangle inequality,
\begin{equation}\label{3.4}
\| u \|_{L_{t,x}^{8}(\mathbb{R} \times \mathbb{R}^{3})} \lesssim E^{4/7} \sup_{j} \| u \|_{L_{t,x}^{8}(I_{j} \times \mathbb{R}^{3})}.
\end{equation}
Therefore, to obtain a bound on the scattering size of $u$, it is enough to obtain a bound on $\| u \|_{L_{t,x}^{8}(I_{j} \times \mathbb{R}^{3})}$ that is uniform in $j$.\medskip

Fix $I_{j} = [a_{j}, b_{j}]$. If there exists some $t_{j} \in [a_{j}, b_{j}]$ such that
\begin{equation}\label{3.5}
\| S(t - t_{j})(u(t_{j}), u_{t}(t_{j})) \|_{L_{t,x}^{8}([t_{j}, b_{j}] \times \mathbb{R}^{3})} \leq \epsilon,
\end{equation}
then Theorems $\ref{t2.1}$ and $\ref{t2.2}$ imply
\begin{equation}\label{3.6}
\| u \|_{L_{t,x}^{8}(I_{j} \times \mathbb{R}^{3})} \lesssim A \langle t_{j} - a_{j} \rangle^{1/8} + \epsilon.
\end{equation}

For any $t \in \mathbb{R}$, Duhamel's principle implies that the solution to $(\ref{1.1})$ has the form
\begin{equation}\label{3.1}
S(t)(u_{0}, u_{1}) - \int_{0}^{t} S(t - \tau)(0, u^{5}) d\tau.
\end{equation}
Therefore, for any $t > t_{j}$,
\begin{equation}\label{3.7}
S(t - t_{j})(u(t_{j}), u_{t}(t_{j})) = S(t)(u_{0}, u_{1}) - \int_{0}^{t_{j}} S(t - \tau)(0, u^{5}) d\tau.
\end{equation}
By definition of $I_{j}$, $(\ref{3.3})$ implies
\begin{equation}\label{3.8}
\| S(t)(u_{0}, u_{1}) \|_{L_{t,x}^{8}([t_{j}, b_{j}] \times \mathbb{R}^{3})} \leq \frac{\epsilon}{4}.
\end{equation}
Therefore, to obtain $(\ref{3.5})$ it only remains to prove
\begin{equation}\label{3.9}
\| \int_{0}^{t_{j}} S(t - \tau)(0, u^{5}) d\tau \|_{L_{t,x}^{8}([t_{j}, b_{j}] \times \mathbb{R}^{3})} \leq \frac{3 \epsilon}{4}.
\end{equation}
Suppose without loss of generality that $a_{j} \geq 0$ and decompose
\begin{equation}\label{3.10}
\int_{0}^{t_{j}} S(t - \tau)(0, u^{5}) d\tau = \int_{0}^{t_{j} - T} S(t - \tau)(0, u^{5}) d\tau + \int_{t_{j} - T}^{t_{j}} S(t - \tau)(0, u^{5}) d\tau.
\end{equation}

\begin{lemma}\label{l3.1}
There exists $T \sim \frac{A^{2} E^{13/6}}{\epsilon^{4}}$ such that
\begin{equation}\label{3.11}
\| \int_{0}^{t_{j} - T} S(t - \tau)(0, u^{5}) d\tau \|_{L_{t,x}^{8}([t_{j}, b_{j}] \times \mathbb{R}^{3})} \leq \frac{\epsilon}{4},
\end{equation}
for any $t_{j} \in \mathbb{R}$.
\end{lemma}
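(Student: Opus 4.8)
\emph{Strategy.} The expression to be bounded is a solution of the free wave equation forced on $[0,t_j-T]$ and measured on the strictly later interval $[t_j,b_j]$; since every $\tau\le t_j-T$ precedes every $t\ge t_j$ we always have $t-\tau\ge T$, and the operator involved is \emph{non-retarded}, so no Christ--Kiselev splitting is needed and the dispersive bounds $(\ref{2.9})$ can be inserted directly into a Hardy--Littlewood--Sobolev estimate in the time variable, just as in the passage from $(\ref{2.9})$ to $(\ref{2.10})$. First I would interpolate $(\ref{2.9})$ to obtain, for $t>\tau$,
\[
\|S(t-\tau)(0,F)\|_{L^8_x}\;\lesssim\;(t-\tau)^{-3/4}\,\big\||\nabla|^{1/2}F\big\|_{L^{8/7}_x},
\]
and then, using $t-\tau\ge T$ on the support of the integrand, write $(t-\tau)^{-3/4}\le T^{-1/4}(t-\tau)^{-1/2}$ and apply the one-dimensional Hardy--Littlewood--Sobolev inequality in $\tau$ to the kernel $(t-\tau)^{-1/2}$; this gives
\[
\Big\|\int_0^{t_j-T}S(t-\tau)(0,u^5)\,d\tau\Big\|_{L^8_{t,x}([t_j,b_j]\times\mathbb{R}^3)}\;\lesssim\;T^{-1/4}\,\big\||\nabla|^{1/2}(u^5)\big\|_{L^{8/5}_tL^{8/7}_x([0,t_j-T]\times\mathbb{R}^3)}.
\]

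To estimate the forcing I would use the fractional Leibniz rule, $\||\nabla|^{1/2}(u^5)\|_{L^{8/7}_x}\lesssim\||\nabla|^{1/2}u\|_{L^3_x}\|u\|_{L^{96/13}_x}^{4}$, together with the Sobolev embedding $\||\nabla|^{1/2}u\|_{L^3_x}\lesssim\|u\|_{\dot H^1}\lesssim E^{1/2}$ and the fact that interpolating $(\ref{1.12})$ against the energy bounds controls $\|u(t)\|_{L^q_x}$ for every $q\in[6,\infty]$ by a fixed power of $E$ and $A$, uniformly in $t$. Thus $\||\nabla|^{1/2}(u^5(t))\|_{L^{8/7}_x}$ is bounded uniformly in $t$, but a crude use of this only yields $\||\nabla|^{1/2}(u^5)\|_{L^{8/5}_tL^{8/7}_x([0,t_j-T])}\lesssim\langle t_j\rangle^{5/8}$ times a power of $E$ and $A$, and a fixed $T$ cannot absorb this growth.

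To remove the growth in $t_j$ I would use the resummation
\[
\int_0^{t_j-T}S(t-\tau)(0,u^5)\,d\tau\;=\;S(t)(u_0,u_1)\;-\;S\big(t-(t_j-T)\big)\big(u(t_j-T),u_t(t_j-T)\big),
\]
which follows from $(\ref{3.1})$ at time $t_j-T$ and the group law: the entire past has been collapsed into the single Cauchy datum $(u(t_j-T),u_t(t_j-T))$, which by $(\ref{1.12})$ and conservation of energy lies in a ball of $(L^2\cap\dot H^1\cap\dot H^2)\times(\dot H^{-1}\cap L^2\cap\dot H^1)$ whose radius depends only on $E$ and $A$, and on $[t_j,b_j]$ this expression is the free evolution of those data after elapsed time $\ge T$. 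A Littlewood--Paley decomposition then splits the estimate into three regimes: the dyadic pieces of frequency $\ge N_0$ are summed using $\|(u,u_t)\|_{L^\infty_t\dot H^2\times\dot H^1}\le A$ (each block has $\dot H^1\times L^2$-norm $\lesssim N^{-1}A$, so the series is $\lesssim N_0^{-1}A$), those of frequency $\le N_1$ using $\|(u,u_t)\|_{L^\infty_tL^2\times\dot H^{-1}}\le A$ (each block has $\dot H^1\times L^2$-norm $\lesssim NA$, summing to $\lesssim N_1 A$), and on the finitely many remaining dyadic scales one invokes the dispersive decay of $(\ref{2.9})$ together with the elapsed time $\ge T$, exactly as in the first paragraph. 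Optimising $N_0$ and $N_1$, and balancing the $T^{-1/4}$ gain against the resulting $t_j$-independent forcing bound, which turns out to be of size $\sim A^{1/2}E^{13/24}$, produces the stated threshold $T\sim A^2E^{13/6}/\epsilon^4$.

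The main obstacle is the intermediate-frequency regime in that last step. The free wave in $\mathbb{R}^3$ disperses only at the borderline rate $|t|^{-1}$ in $L^\infty_x$, hence only $|t|^{-3/4}$ in $L^8_x$, which is not integrable in time, so the accumulated contribution of a long history is not \emph{a priori} bounded; the argument therefore has to rely entirely on the elapsed-time factor $T$ and on the smallness of the $\dot H^1\times L^2$ energy carried by each dyadic block, and the Hardy--Littlewood--Sobolev and interpolation exponents must be tracked carefully so that everything closes uniformly in $t_j$. Once that is in place, combining the three contributions and choosing $T$ as above gives $(\ref{3.11})$.
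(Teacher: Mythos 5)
You correctly identify the resummation
$\int_{0}^{t_{j}-T} S(t-\tau)(0,u^{5})\,d\tau = S(t)(u_{0},u_{1}) - S\bigl(t-(t_{j}-T)\bigr)\bigl(u(t_{j}-T),u_{t}(t_{j}-T)\bigr)$
and, implicitly, the fact that one needs to interpolate a Strichartz-type bound for this quantity against a second estimate that decays in $T$; this is indeed how the paper organizes the proof, with the paper's $L_{t,x}^{4}$ Strichartz estimate $(\ref{3.16})$ (from $\dot H^{1/2}\times\dot H^{-1/2}$ control of both $(u_{0},u_{1})$ and $(u(t_{j}-T),u_{t}(t_{j}-T))$) playing the role of the ``bounded but not small'' endpoint. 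However, your route to the $T$-decaying endpoint does not close, and you acknowledge as much when you describe the intermediate-frequency regime as ``the main obstacle'' without resolving it. The problem is structural: after resummation the quantity to be bounded on $[t_{j},b_{j}]$ is a free evolution of the data $(u(t_{j}-T),u_{t}(t_{j}-T))$, and these data are only controlled in $L^{2}$-based Sobolev spaces ($L^{2}\cap\dot H^{1}\cap\dot H^{2}$ and dually). The free wave group is unitary on every $\dot H^{s}\times\dot H^{s-1}$, so an elapsed time $\geq T$ buys you nothing in those norms, and the dispersive estimates $(\ref{2.9})$ require an $L^{1}$-type (or $L^{p'}$ with $p'<2$) bound on the data which $(\ref{1.12})$ and the energy bound simply do not give you. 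Littlewood--Paley localization does not help: the $3$D wave propagator decays at rate $|t|^{-1}$ at every fixed frequency, so frequency truncation gives no improvement in the time-decay rate, and the per-block $\dot H^{1}\times L^{2}$ energy on the intermediate scales is only bounded by $E^{1/2}$, not small. Your earlier HLS-in-$\tau$ variant faces the same issue: it requires control of $\||\nabla|^{1/2}(u^{5})\|_{L_{t}^{8/5}L_{x}^{8/7}([0,t_{j}-T])}$, which you correctly note grows like $\langle t_{j}\rangle^{5/8}$, and the resummation converts that growth into the $L^{1}$-control problem just described rather than eliminating it.

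The mechanism the paper uses, and which your proposal is missing, is genuinely wave-specific: the Kirchhoff representation $(\ref{3.12})$ writes the Duhamel piece as an integral of $u^{5}$ over the backward light cone from $(t,x)$, and the energy flux inequality $(\ref{3.13})$ bounds $\int u^{6}\,dS\,d\tau$ over any backward light cone by $E$. Applying H\"older on the sphere and then in $\tau$ with exponents $6/5$ and $6$, the flux bound absorbs the $u^{6}$ factor and leaves a factor $\bigl(\int_{0}^{t_{j}-T}(t-\tau)^{-4}\,d\tau\bigr)^{1/6}\lesssim T^{-1/2}$, yielding the pointwise bound $(\ref{3.14})$ of size $E^{5/6}T^{-1/2}$ uniformly in $(t,x)$. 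This $L_{t,x}^{\infty}$ estimate is the second interpolation endpoint; interpolating it against $(\ref{3.16})$ produces $(\ref{3.17})$ and hence the stated choice of $T$. Without the light-cone energy-flux input, no amount of dyadic decomposition or HLS bookkeeping will produce a $T$-decaying bound from purely $L^{2}$-based a priori control, and that is the gap in your argument.
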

\begin{proof}
For any $(t, x) \in [t_{j}, b_{j}] \times \mathbb{R}^{3}$,
\begin{equation}\label{3.12}
(\int_{0}^{t_{j} - T} S(t - \tau)(0, u^{5}) d\tau)(t,x) = \int_{0}^{t_{j} - T} \frac{1}{4 \pi(t - \tau)} \int_{|x - x'| = |t - \tau|} u^{5}(\tau, x') dS(x'),
\end{equation}
where $dS$ denotes the surface measure of a sphere in $\mathbb{R}^{3}$. See for example \cite{sogge1995lectures}. Also, by computing the energy flux, for any $R > 0$, $x_{0} \in \mathbb{R}^{3}$,
\begin{equation}\label{3.13}
\frac{d}{dt} \int_{|x - x_{0}| \leq R - t} [\frac{1}{2} |\nabla u(t,x)|^{2} + \frac{1}{2} |u_{t}(t,x)|^{2} + \frac{1}{6} |u(t,x)|^{6}] dx \leq -\frac{1}{2} \int_{|x - x_{0}| = R - t} |u(t,x)|^{6} d\sigma(x).
\end{equation}
Therefore, by conservation of energy, $(\ref{3.13})$, and H{\"o}lder's inequality,
\begin{equation}\label{3.14}
|\int_{0}^{t_{j} - T} \frac{1}{4 \pi(t - \tau)} \int_{|x - x'| = |t - \tau|} u^{5}(\tau, x') dS(x')| \lesssim E^{5/6} (\int_{0}^{t_{j} - T} \frac{1}{|t - \tau|^{4}})^{1/6} \lesssim \frac{E^{5/6}}{T^{1/2}}.
\end{equation}
Strichartz estimates, the energy identity, and Duhamel's principle also imply that
\begin{equation}\label{3.16}
\aligned
\| \int_{0}^{t_{j} - T} S(t - \tau)(0, u^{5}) d\tau \|_{L_{t,x}^{4}([t_{j}, b_{j}] \times \mathbb{R}^{3})} \lesssim \| (u(t_{j} - T), u_{t}(t_{j} - T)) \|_{\dot{H}^{1/2} \times \dot{H}^{-1/2}} \\ + \| (u_{0}, u_{1}) \|_{\dot{H}^{1/2} \times \dot{H}^{-1/2}} \lesssim E^{1/4} A^{1/2}.
\endaligned
\end{equation}
Interpolating $(\ref{3.14})$ and $(\ref{3.16})$,
\begin{equation}\label{3.17}
\| \int_{0}^{t_{j} - T} S(t - \tau)(0, u^{5}) d\tau \|_{L_{t,x}^{8}([t_{j}, b_{j}] \times \mathbb{R}^{3})} \lesssim A^{1/2} \frac{E^{13/24}}{T^{1/4}}.
\end{equation}
Taking $T \sim \frac{A^{2} E^{13/6}}{\epsilon^{4}}$ gives the estimate
\begin{equation}\label{3.18}
\| \int_{0}^{t_{j} - T} S(t - \tau)(0, u^{5}) d\tau \|_{L_{t,x}^{8}([t_{j}, b_{j}] \times \mathbb{R}^{3})} \leq \frac{\epsilon}{4}.
\end{equation}
\end{proof}

Therefore, to obtain uniform spacetime integral bounds on a solution to $(\ref{1.1})$, it only remains to prove that there exists $\mathcal T(T)$ such that for any $j$, if $b_{j} - a_{j} \geq \mathcal T(T)$, there exists $t_{j} \in I_{j}$, $0 < t_{j} - a_{j} \leq \mathcal T(T)$ that satisfies
\begin{equation}\label{3.19}
\| \int_{t_{j} - T}^{t_{j}} S(t - \tau)(0, u^{5}) d\tau \|_{L_{t,x}^{8}([t_{j}, b_{j}] \times \mathbb{R}^{3})} \leq \frac{\epsilon}{2}. 
\end{equation}
This will be the topic of the next section. The proof will utilize an interaction Morawetz estimate.

\section{Interaction Morawetz estimate}
\begin{theorem}\label{t4.1}
For $T \sim \frac{A^{2} E^{13/6}}{\epsilon^{4}}$ there exists $\mathcal T(T)$ such that for any $j$, if $b_{j} - a_{j} \geq \mathcal T(T)$, there exists $t_{j} \in I_{j}$, $0 < t_{j} - a_{j} \leq \mathcal T(T)$ that satisfies
\begin{equation}\label{5.1}
\| \int_{t_{j} - T}^{t_{j}} S(t - \tau)(0, u^{5}) d\tau \|_{L_{t,x}^{8}([t_{j}, b_{j}] \times \mathbb{R}^{3})} \leq \frac{\epsilon}{2}. 
\end{equation}
\end{theorem}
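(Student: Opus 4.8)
The plan is to reduce the bound \eqref{5.1} to the smallness of $\int\!\int |u|^8$ over a short window lying in the left part of $I_j$, and then to locate such a window by means of an interaction Morawetz estimate. \textbf{Reduction:} for $\tau \in [t_j-T,t_j]$ and $t \geq t_j$ the retarded propagator $S(t-\tau)(0,\cdot)$ is exactly the one estimated in the proof of Theorem~\ref{t2.2}, and that argument --- the dispersive bounds \eqref{2.9}, the Hardy--Littlewood--Sobolev inequality, the Christ--Kiselev lemma and Sobolev embedding, i.e.\ the chain \eqref{2.8}--\eqref{2.11} --- uses only that $t \geq \tau$, so it applies with the source on $[t_j-T,t_j]$ and the $L^8_{t,x}$ norm over $[t_j,b_j]$:
\begin{equation*}
\begin{aligned}
\Big\| \int_{t_j-T}^{t_j} S(t-\tau)(0,u^5)\, d\tau \Big\|_{L^8_{t,x}([t_j,b_j]\times \mathbb{R}^3)}
&\lesssim \| \nabla u \|_{L^\infty_t L^2_x}^{1/2}\, \| u \|_{L^8_{t,x}([t_j-T,t_j]\times\mathbb{R}^3)}^{9/2}\\
&\lesssim E^{1/4}\, \| u \|_{L^8_{t,x}([t_j-T,t_j]\times\mathbb{R}^3)}^{9/2}.
\end{aligned}
\end{equation*}
Hence it suffices to find $t_j$ with $[t_j-T,t_j] \subseteq [a_j, a_j + \mathcal T(T)] \subseteq I_j$ and $\int_{t_j-T}^{t_j}\int_{\mathbb{R}^3} |u|^8\,dx\,dt \leq \epsilon_1$, where $\epsilon_1 \sim (\epsilon E^{-1/4})^{16/9}$.

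\textbf{The interaction Morawetz estimate} is the heart of the matter. Following the strategy of \cite{dodson2017new1}, I would pair the energy-flux (momentum) density $\vec p = -u_t \nabla u$ against the energy density $e = \tfrac12|\nabla u|^2 + \tfrac12 u_t^2 + \tfrac16 u^6$ --- which satisfy the local conservation law $\partial_t e + \nabla\cdot\vec p = 0$ --- through the weight $|x-y|^{-1}$, forming a functional $M(t)$, and differentiate in $t$ using \eqref{1.1}. The defocusing nonlinearity then contributes a term of favorable sign, the angular-derivative parts of the stress tensor are nonnegative, and the contribution of $\Delta^2_x |x-y| \propto \delta(x-y)$ yields a positive spacetime density, so that for every subinterval $J$,
\begin{equation*}
\int_J \int_{\mathbb{R}^3} \int_{\mathbb{R}^3} \frac{(\text{nonnegative density})}{|x-y|}\, dx\, dy\, dt \;\lesssim\; \sup_{t \in J} |M(t)|.
\end{equation*}
The crucial point is that the right-hand side is bounded by a fixed power of $E$ and $A$ \emph{uniformly in $J$}: Hardy--Littlewood--Sobolev gives $|M(t)| \lesssim \| \vec p(t) \|_{L^1_x} \| e(t) \|_{L^{3/2}_x}$, and then $\| \vec p \|_{L^1_x} \lesssim E$ by conservation of energy, while interpolating the a priori $\dot H^1$ and $\dot H^2$ bounds of \eqref{1.11}--\eqref{1.12} places $\nabla u, u_t \in L^3_x$ and $u \in L^9_x$, hence $e \in L^{3/2}_x$ with norm a power of $E$ and $A$.

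\textbf{Finding the window and finishing.} Unlike in the Schr{\"o}dinger case, this weighted estimate does not directly bound $\| u \|_{L^8_{t,x}}$; rather, it forces the energy to spread out, so that on a sufficiently long interval there is a window over which $\int\!\int |u|^8$ is small --- this is the assertion \eqref{1.15} from the introduction. Concretely, using $H^2_x(\mathbb{R}^3) \hookrightarrow L^\infty_x$ (so that \eqref{1.12} gives $\| u \|_{L^\infty_{t,x}} \lesssim A$) together with the positive density above, one extracts a nonnegative $q(t)$ with $\int_{[a_j,b_j]} q(t)\,dt \lesssim G(E,A)$ and $\int_{t_0}^{t_0+T}\!\int |u|^8 \lesssim A^{\kappa}\int_{t_0}^{t_0+T} q(t)\,dt$ for some fixed $\kappa$. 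Partitioning $[a_j, a_j+\mathcal T(T)]$ into $\mathcal T(T)/T$ consecutive windows of length $T$, the smallest one, call it $[t_j-T,t_j]$, satisfies $\int_{t_j-T}^{t_j}\int |u|^8 \lesssim A^{\kappa} G(E,A)\, T / \mathcal T(T)$; taking $\mathcal T(T)$ an absolute constant times $A^\kappa G(E,A)\, T/\epsilon_1$ makes this at most $\epsilon_1$. Then $t_j \in [a_j+T, a_j+\mathcal T(T)] \subseteq I_j$, so $0 < t_j - a_j \leq \mathcal T(T)$, and \eqref{5.1} follows from the Reduction.

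\textbf{Main obstacle.} The genuinely hard parts are: (i) carrying out the interaction Morawetz computation so that its error terms are absorbable at the energy-critical regularity and its functional $M(t)$ is controlled by $E$ and $A$ alone --- a naive choice of the paired density fails one of these; and (ii) the passage from the $|x-y|^{-1}$-weighted quantity to an honest bound on $\int\!\int |u|^8$ over a window, which is where the $\dot H^2$ hypothesis enters essentially and where the (large, explicit) dependence of $\mathcal T(T)$ on $E$ and $A$ --- hence the form of \eqref{1.13} --- is generated. Everything else is standard Strichartz/dispersive bookkeeping and the pigeonhole above.
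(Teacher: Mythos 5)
There is a genuine gap at the heart of the proposal, and the route taken for the crucial step is materially different from — and more fragile than — the paper's.

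The paper's Morawetz potential $M_{R}(t)$ of \eqref{5.2} is \emph{not} the $\frac{x-y}{|x-y|}$-weighted (or $|x-y|^{-1}$-weighted) interaction functional you describe. It is a cutoff functional with weight $\phi(\tfrac{x-y}{R})(x-y)$, so $\sup_{t}|M_{R}(t)| \lesssim R\,E^{2}$ \emph{grows} with $R$. The paper then averages logarithmically, $\frac{1}{J}\int_{R_{0}}^{e^{J}R_{0}}\frac{dR}{R}$, and this is exactly what produces the boundary terms $\frac{\mathcal T}{J}E^{2} + \frac{e^{J}R_{0}}{J}E^{2}$ in \eqref{5.19}, forcing the choice $\mathcal T = e^{J}R_{0}$ and hence the \emph{exponential} dependence of $\mathcal T$ on $E$ and $A$. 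Your functional has $\sup_{t}|M(t)|$ bounded uniformly by a fixed power of $E$ and $A$, so your pigeonhole yields a $\mathcal T(T)$ that is \emph{polynomial} in $E,A$ — indeed, if the step "$\int_{a_j}^{b_j} q(t)\,dt \lesssim G(E,A)$ with $\int\!\int |u|^{8}\lesssim A^{\kappa}\int q$" really held uniformly in $[a_j,b_j]$, you would have $\|u\|_{L^{8}_{t,x}(\mathbb{R}\times\mathbb{R}^{3})}^{8}\lesssim A^{\kappa}G(E,A)$ outright, no pigeonhole needed, contradicting the exponential bound \eqref{1.13} that the paper actually obtains. That should be a warning sign: you are asserting something strictly stronger than the theorem without a mechanism.

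The concrete reason your version does not close is the cross-term coming from $\partial_t e(y) = -\nabla\cdot\vec p(y)$. In the paper's calculation this is \eqref{5.4}--\eqref{5.5}, which after integration by parts become \eqref{5.15}--\eqref{5.16}: a term of the form $\int\langle\partial_k u,u_t\rangle \phi(\tfrac{x-y}{R})\delta_{jk}\langle\partial_j u,u_t\rangle$ with a \emph{bounded} kernel, and the paper absorbs it by the algebraic identity $e(y) e'(x) - |\nabla u(y)||u_t(y)| |\nabla u(x)||u_t(x)| \geq [\tfrac12(|u_t(y)|-|\nabla u(y)|)^2 + \tfrac16 u(y)^6]e'(x)$, i.e.\ the positive-definiteness step leading to \eqref{5.20}. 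With the singular weight $a=|x-y|$ the analogous cross-term carries the Hessian $\partial^2_{jk}|x-y|\sim |x-y|^{-1}\Pi_\angle$, which is \emph{not} majorized by the good term pointwise (the good term has only angular derivatives against $e(y)$), and nothing in your sketch shows it can be absorbed. Equally, the passage from the resulting positive density (which is essentially a $u^{6}$-type density, not a $u^{8}$-type one) to "$\int\!\int |u|^{8}$ small on a window" is never argued; the paper instead goes through the box decomposition \eqref{5.22}, the $L^\infty$ bound \eqref{5.23}--\eqref{5.25} requiring $R\gtrsim E^{1/3}/A$, interpolation to $L^{24}_{t}L^{9}_{x}$ in \eqref{5.27}, interpolation against the $L^\infty_tL^2_x$ bound in \eqref{5.28}--\eqref{5.29} to reach $L^{27}_t L^{162/25}_x$ smallness, and then a separate Strichartz bootstrap \eqref{5.31} at the exotic pair $(9/4,54)$ — not directly $\|u\|_{L^8_{t,x}}$ smallness with \eqref{2.11}. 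Your "Reduction" paragraph and the form of your sought window estimate are therefore not what the Morawetz output actually looks like.

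In short: the overall architecture (locate a time window where Morawetz output is small, then close with dispersive estimates) matches the paper, but the two load-bearing steps — the precise choice of Morawetz potential with logarithmic $R$-averaging, and the way smallness is transferred to a controllable Strichartz norm — are missing or replaced by claims you flag yourself as unverified, and the version you sketch would, if correct, prove a strictly stronger statement than Theorem~\ref{t1.2}. The gap is real, not cosmetic.
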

\begin{proof}
The proof uses an interaction Morawetz estimate. Choose $\phi \in C_{0}^{\infty}(\mathbb{R}^{3})$ that is supported on $|x| \leq 2$, $\phi(x) = 1$ on $|x| \leq 1$, $\phi(x) \geq 0$. Let $M_{R}(t)$ denote the Morawetz potential,
\begin{equation}\label{5.2}
M_{R}(t) = \int e(t,y) \phi(\frac{x - y}{R}) (x - y) \cdot \langle u_{t}, \nabla u \rangle dx dy + \int e(t,y) \phi(\frac{x - y}{R}) \langle u_{t}, u \rangle dx dy,
\end{equation}
where $R > 0$ is a fixed constant and $e(t, y)$ is the energy density
\begin{equation}\label{5.3}
e(t, y) = \frac{1}{2} u_{t}(t, y)^{2} + \frac{1}{2} |\nabla u(t, y)|^{2} + \frac{1}{6} |u(t,y)|^{6}.
\end{equation}
By direct calculation,
\begin{equation}\label{5.4}
\frac{d}{dt} M_{R}(t) = \int \nabla \cdot \langle u_{t}, \nabla u \rangle \phi(\frac{x - y}{R}) (x - y) \cdot \langle u_{t}, \nabla u \rangle
\end{equation}
\begin{equation}\label{5.5}
+ \int \nabla \cdot \langle u_{t}, \nabla u \rangle \phi(\frac{x - y}{R}) \langle u_{t}, u \rangle dx dy
\end{equation}
\begin{equation}\label{5.6}
+ \int e(t,y) \phi(\frac{x - y}{R}) (x - y) \cdot [\langle \nabla u, \Delta u \rangle - \langle u^{5}, \nabla u \rangle] dx dy
\end{equation}
\begin{equation}\label{5.7}
+ \int e(t,y) \phi(\frac{x - y}{R}) (x - y) \cdot \langle u_{t}, \nabla u_{t} \rangle dx dy
\end{equation}
\begin{equation}\label{5.8}
+ \int e(t,y) \phi(\frac{x - y}{R}) [\langle u, \Delta u \rangle - \langle u, u^{5} \rangle] dx dy
\end{equation}
\begin{equation}\label{5.9}
+ \int e(t,y) \phi(\frac{x - y}{R}) \langle u_{t}, u_{t} \rangle dx dy.
\end{equation}
Integrating by parts,
\begin{equation}\label{5.10}
(\ref{5.7}) = -\frac{3}{2} \int e(t,y) \phi(\frac{x - y}{R}) u_{t}^{2} dx dy - \frac{1}{2} \int e(t,y) \phi'(\frac{x - y}{R}) \frac{|x - y|}{R} u_{t}^{2} dx dy,
\end{equation}
so
\begin{equation}\label{5.11}
(\ref{5.7}) + (\ref{5.9}) = -\frac{1}{2} \int e(t,y) \phi(\frac{x - y}{R}) u_{t}^{2} dx dy - \frac{1}{2} \int e(t,y) \phi'(\frac{x - y}{R}) \frac{|x - y|}{R} u_{t}^{2} dx dy.
\end{equation}
Also integrating by parts,
\begin{equation}\label{5.12}
\aligned
(\ref{5.8}) = -\int e(t,y) \phi(\frac{x - y}{R}) |\nabla u|^{2} dx dy - \int e(t,y) \phi(\frac{x - y}{R}) u^{6} dx dy \\ + \frac{1}{2} \int e(t,y) \phi''(\frac{x - y}{R}) \frac{1}{R^{2}} u^{2} dx dy.
\endaligned
\end{equation}
Rewriting,
\begin{equation}\label{5.13}
\langle \Delta u, \nabla u \rangle = \partial_{k} \langle \partial_{j} u, \partial_{k} u \rangle - \frac{1}{2} \partial_{j} \langle \partial_{k} u, \partial_{k} u \rangle,
\end{equation}
and integrating by parts,
\begin{equation}\label{5.14}
\aligned
(\ref{5.6}) = \frac{1}{2} \int e(t,y) \phi(\frac{x - y}{R}) |\nabla u|^{2} dx dy + \frac{1}{6} \int e(t,y) \phi(\frac{x - y}{R}) u^{6} dx dy \\
+ \frac{1}{2} \int e(t,y) \phi'(\frac{x - y}{R}) \frac{(x - y)_{j} (x - y)_{k}}{|x - y| R} \langle \partial_{j} u, \partial_{k} u \rangle dx dy \\ - \frac{1}{2} \int e(t,y) \phi'(\frac{x - y}{R}) \frac{|x - y|}{R} |\nabla u|^{2} dx dy \\ + \frac{1}{6} \int e(t,y) \phi'(\frac{x - y}{R}) \frac{|x - y|}{R} u^{6} dx dy.
\endaligned
\end{equation}
Also, integrating by parts,
\begin{equation}\label{5.15}
\aligned
(\ref{5.4}) = \int \langle \partial_{k} u, u_{t} \rangle \phi(\frac{x - y}{R}) \delta_{jk} \langle \partial_{j} u, u_{t} \rangle dx dy \\ + \int \langle \partial_{k} u, u_{t} \rangle \phi'(\frac{x - y}{R}) \frac{(x - y)_{j} (x - y)_{k}}{|x - y| R} \langle \partial_{j} u, u_{t} \rangle dx dy.
\endaligned
\end{equation}
Finally, integrating by parts,
\begin{equation}\label{5.16}
\aligned
(\ref{5.5}) = \int \langle \partial_{k} u, u_{t} \rangle \phi'(\frac{x - y}{R}) \frac{(x - y)_{k}}{|x - y| R} \langle u, u_{t} \rangle dx dy.
\endaligned
\end{equation}
Therefore,
\begin{equation}\label{5.17}
\aligned
(\ref{5.4}) + (\ref{5.5}) + (\ref{5.6}) + (\ref{5.7}) + (\ref{5.8}) + (\ref{5.9}) \\ = -\int [\frac{1}{2} u_{t}^{2} + \frac{1}{2} |\nabla u|^{2} + \frac{1}{6} u^{6}] \phi(\frac{x - y}{R}) [\frac{1}{2} u_{t}^{2} + \frac{1}{2} |\nabla u|^{2} + \frac{2}{3} u^{6}] dx dy \\
+ \int \langle \partial_{k} u, u_{t} \rangle \phi(\frac{x - y}{R}) \delta_{jk} \langle \partial_{j} u, u_{t} \rangle dx dy \\ 
+ \int \langle \partial_{k} u, u_{t} \rangle \phi'(\frac{x - y}{R}) \frac{(x - y)_{j} (x - y)_{k}}{|x - y| R} \langle \partial_{j} u, u_{t} \rangle dx dy \\
- \frac{1}{2} \int e(t,y) \phi'(\frac{x - y}{R}) \frac{|x - y|}{R} u_{t}^{2} dx dy \\
+ \frac{1}{2} \int e(t,y) \phi''(\frac{x - y}{R}) \frac{1}{R^{2}} u^{2} dx dy \\
+ \frac{1}{2} \int e(t,y) \phi'(\frac{x - y}{R}) \frac{(x - y)_{j} (x - y)_{k}}{|x - y| R} \langle \partial_{j} u, \partial_{k} u \rangle dx dy \\ - \frac{1}{2} \int e(t,y) \phi'(\frac{x - y}{R}) \frac{|x - y|}{R} |\nabla u|^{2} dx dy \\ + \frac{1}{6} \int e(t,y) \phi'(\frac{x - y}{R}) \frac{|x - y|}{R} u^{6} dx dy \\
+  \int \langle \partial_{k} u, u_{t} \rangle \phi'(\frac{x - y}{R}) \frac{(x - y)_{k}}{|x - y| R} \langle u, u_{t} \rangle dx dy.
\endaligned
\end{equation}
Therefore,
\begin{equation}\label{5.18}
\aligned
\frac{1}{J} \int_{R_{0}}^{e^{J} R_{0}} \frac{1}{R} \frac{d}{dt} M_{R}(t) dR \\ = -\frac{1}{J} \int_{R_{0}}^{e^{J} R_{0}} \frac{1}{R} \int [\frac{1}{2} u_{t}^{2} + \frac{1}{2} |\nabla u|^{2} + \frac{1}{6} u^{6}] \phi(\frac{x - y}{R}) [\frac{1}{2} u_{t}^{2} + \frac{1}{2} |\nabla u|^{2} + \frac{2}{3} u^{6}] dx dy dR \\
+ \frac{1}{J} \int_{R_{0}}^{e^{J} R_{0}} \frac{1}{R} \int \langle \partial_{j} u, u_{t} \rangle \phi(\frac{x - y}{R}) \langle \partial_{j} u, u_{t} \rangle dx dy dR \\
+ O(\frac{1}{J} \int_{|x - y| \leq 2 e^{J} R_{0}} e(t,y) e(t,x) dx dy) \\ + O(\frac{1}{J} \int_{|x - y| \leq 2 e^{J} R_{0}} e(t,y) \frac{1}{|x - y|^{2}} u(t,x)^{2} dx dy).
\endaligned
\end{equation}
Therefore, by the fundamental theorem of calculus,
\begin{equation}\label{5.19}
\aligned
\int_{a_{j}}^{a_{j} + \mathcal T} \frac{1}{J} \int_{R_{0}}^{e^{J} R_{0}} \frac{1}{R} \int [\frac{1}{2} u_{t}^{2} + \frac{1}{2} |\nabla u|^{2} + \frac{1}{6} u^{6}] \phi(\frac{x - y}{R}) [\frac{1}{2} u_{t}^{2} + \frac{1}{2} |\nabla u|^{2} + \frac{2}{3} u^{6}] dx dy dR dt \\
- \int_{0}^{T} \frac{1}{J} \int_{R_{0}}^{e^{J} R_{0}} \frac{1}{R} \int |\nabla u| |u_{t}| \phi(\frac{x - y}{R}) |\nabla u| |u_{t}| dx dy dR dt \lesssim \frac{\mathcal T}{J} E^{2} + \frac{e^{J} R_{0}}{J} E^{2}.
\endaligned
\end{equation}
Now by positive definiteness argument, if we choose $\mathcal T = e^{J} R_{0}$,
\begin{equation}\label{5.20}
\aligned
\int_{a_{j}}^{a_{j} + \mathcal T} \frac{1}{J} \int_{R_{0}}^{e^{J} R_{0}} \frac{1}{R} \int [\frac{1}{2} (|u_{t}| - |\nabla u|)^{2} + \frac{1}{6} u^{6}] \phi(\frac{x - y}{R}) [\frac{1}{2} u_{t}^{2} + \frac{1}{2} |\nabla u|^{2} + \frac{2}{3} u^{6}] dx dy dR dt  \lesssim \frac{\mathcal T}{J} E^{2}.
\endaligned
\end{equation}

Inequality $(\ref{5.20})$ implies that there exists some $t_{j} \in [a_{j} + T, a_{j} + \mathcal T + T]$ and some $R_{0} \leq R \leq e^{J} R_{0}$ such that
\begin{equation}\label{5.21}
\int_{t_{j} - T}^{t_{j}} \int \int [\frac{1}{2}(|\nabla u| - |u_{t}|)^{2} + \frac{1}{6} u^{6}] \phi(\frac{x - y}{R}) [\frac{1}{2} u_{t}^{2} + \frac{1}{2} |\nabla u|^{2} + \frac{2}{3} u^{6}] dx dy dt \lesssim \frac{T}{J} E^{2}.
\end{equation}
Rewriting $(\ref{5.21})$ as a sum,
\begin{equation}\label{5.22}
\int_{t_{j} - T}^{t_{j}} \sum_{k \in \mathbb{Z}^{3}} (\int \chi(\frac{Rk - x}{R}) (\frac{1}{2} (|\nabla u| - |u_{t}|)^{2} + \frac{1}{6} u^{6} dx)) (\int \chi(\frac{Rk - x}{R}) (\frac{1}{2} u_{t}^{2} + \frac{1}{2} |\nabla u|^{2} + \frac{2}{3} u^{6} dx) dt \lesssim \frac{T}{J} E^{2},
\end{equation}
where $\chi$ is a smooth, compactly supported function, $\chi(x) \geq 0$ on the ball of radius $1$.

Also, by the Sobolev embedding theorem,
\begin{equation}\label{5.23}
\aligned
\| \chi(\frac{Rk - x}{R}) u \|_{L^{\infty}(\mathbb{R}^{3})} \lesssim \| \chi(\frac{Rk - x}{R}) u \|_{L^{6}}^{1/2} \| \nabla (\chi(\frac{Rk - x}{R}) u) \|_{L^{6}}^{1/2} \\
\lesssim \frac{1}{R^{1/2}} \| \chi(\frac{Rk - x}{R}) u \|_{L^{6}}^{1/2} \| \chi'(\frac{Rk - x}{R}) u \|_{L^{6}}^{1/2} +  \| \chi(\frac{Rk - x}{R}) u \|_{L^{6}}^{1/2} \| \chi(\frac{Rk - x}{R}) \nabla u \|_{L^{6}}^{1/2}.
\endaligned
\end{equation}
Therefore, by the support properties of $\chi$ and $\chi'$, $(\ref{5.23})$ implies
\begin{equation}\label{5.24}
(\sum_{k \in \mathbb{Z}^{3}} \| \chi(\frac{Rk - x}{R}) u \|_{L^{\infty}(\mathbb{R}^{3})}^{6})^{1/6} \lesssim \frac{1}{R^{1/2}} \| u \|_{L^{6}(\mathbb{R}^{3})} + \| \nabla u \|_{L^{6}(\mathbb{R}^{3})}^{1/2} \| u \|_{L^{6}(\mathbb{R}^{3})}^{1/2} \lesssim \frac{1}{R^{1/2}} E^{1/6} + E^{1/12} A^{1/2}.
\end{equation}
Taking $R \gtrsim \frac{E^{1/3}}{A}$,
\begin{equation}\label{5.25}
(\sum_{k \in \mathbb{Z}^{3}} \| \chi(\frac{Rk - x}{R}) u \|_{L^{\infty}(\mathbb{R}^{3})}^{6})^{1/6} \lesssim E^{1/12} A^{1/2}.
\end{equation}
Meanwhile, by $(\ref{5.21})$,
\begin{equation}\label{5.26}
(\int_{t_{j} - T}^{t_{j}} \sum_{k \in \mathbb{Z}^{3}} \| \chi(\frac{Rk - x}{R}) u \|_{L^{6}(\mathbb{R}^{3})}^{12} dt)^{1/12} \lesssim \frac{T^{1/12}}{J^{1/12}} E^{1/6}.
\end{equation}
Interpolating $(\ref{5.25})$ and $(\ref{5.26})$,
\begin{equation}\label{5.27}
\| u \|_{L_{t}^{24} L_{x}^{9}([t_{j} - T, t_{j}] \times \mathbb{R}^{3})} \lesssim \frac{T^{1/24}}{J^{1/24}} E^{1/8} A^{1/4}.
\end{equation}
Also by $(\ref{1.12})$,
\begin{equation}\label{5.28}
\| u \|_{L_{t}^{\infty} L_{x}^{2}([t_{j} - T, t_{j}] \times \mathbb{R}^{3})} \lesssim A.
\end{equation}
Interpolating $(\ref{5.27})$ and $(\ref{5.28})$
\begin{equation}\label{5.29}
\| u \|_{L_{t}^{27} L_{x}^{\frac{162}{25}}([t_{j} - T, t_{j}] \times \mathbb{R}^{3})} \lesssim \frac{T^{1/27}}{J^{1/27}} E^{1/9} A^{1/3}.
\end{equation}

Choosing $\delta > 0$ sufficiently small such that $\delta^{3} E \lesssim \epsilon$, $\| u \|_{L_{t}^{27} L_{x}^{\frac{162}{25}}([t_{j} - T, t_{j}] \times \mathbb{R}^{3})} \leq \delta$ implies
\begin{equation}\label{5.30}
\| \int_{t_{j} - T}^{t_{j}} S(t - \tau)(0, u^{5}) d\tau \|_{L_{t,x}^{8}(\mathbb{R} \times \mathbb{R}^{3})} \lesssim \epsilon.
\end{equation}
Indeed, by Strichartz estimates and conservation of energy, if $(p, q)$ is the $\dot{H}^{1}$-admissible pair $(p, q) = (\frac{9}{4}, 54)$,
\begin{equation}\label{5.31}
\| u \|_{L_{t}^{p} L_{x}^{q}} \lesssim E^{1/2} + \delta^{3} \| u \|_{L_{t}^{p} L_{x}^{q}}^{2},
\end{equation}
which implies $\| u \|_{L_{t}^{p} L_{x}^{q}([t_{j} - T, t_{j}] \times \mathbb{R}^{3})} \lesssim E^{1/2}$. Plugging this fact into $(\ref{5.30})$ gives the appropriate estimate.

Doing some algebra with $(\ref{5.29})$, since $\epsilon \sim E^{-1/14}$ and $T \sim \frac{A^{2} E^{13/6}}{\epsilon^{4}}$,
\begin{equation}\label{5.31.1}
\frac{T^{1/27}}{J^{1/27}} E^{1/9} A^{1/3} \sim \frac{A^{11/27} E^{31/162}}{J^{1/27} \epsilon^{4/27}} \lesssim \frac{\epsilon^{1/3}}{E^{1/3}},
\end{equation}
and therefore,
\begin{equation}\label{5.31.2}
\frac{A^{11/27} E^{85/162}}{\epsilon^{13/27}} \sim A^{11/27} E^{85/162} E^{13/378} \lesssim J^{1/27},
\end{equation}
implies that
\begin{equation}\label{5.32}
\| \int_{t_{j} - T}^{t_{j}} S(t - \tau)(0, u^{5}) \|_{L_{t,x}^{8}([t_{j}, b_{j}] \times \mathbb{R}^{3})} \leq \frac{\epsilon}{2}.
\end{equation}
This proves the theorem.
\end{proof}

Setting $\mathcal T = \exp(C E^{85/6} E^{13/14} A^{11})$ for some constant $C$, $(\ref{3.4})$ and $(\ref{3.6})$ imply
\begin{equation}\label{5.33}
\| u \|_{L_{t,x}^{8}(\mathbb{R} \times \mathbb{R}^{3})} \lesssim E^{4/7} A \exp(C E^{85/6} E^{13/14} A^{11}),
\end{equation}
which proves Theorem $\ref{t1.2}$.

\section*{Acknowledgement}
I am grateful to Anudeep Kumar Arora and Jason Murphy for many helpful discussions related to this work.

\bibliography{biblio}
\bibliographystyle{alpha}

\end{document}